\newcommand{\A}{\mathbb A}
\newcommand{\N}{\mathbb N}
\newcommand{\Q}{\mathbb Q}
\newcommand{\Z}{\mathbb Z}
\newcommand{\cF}{\mathcal F}
\newcommand{\cG}{\mathcal G}
\newcommand{\cH}{\mathcal H}
\newcommand{\cI}{\mathcal I}
\newcommand{\PP}{\mathbb P}
\newcommand{\cO}{\mathcal O}
\newcommand{\cP}{\mathcal P}
\newcommand{\omicron}{o}
\newcommand{\cL}{\mathcal L}
\newcommand{\Ga}{\mathbb{G}_a}
\newcommand{\Gm}{\mathbb{G}_m}
\newcommand{\magma}{{\sc Magma}}
\newcommand{\broken}{\dashrightarrow}
\renewcommand{\P}{\mathbb{P}}
\renewcommand{\phi}{\varphi}
\DeclareMathOperator{\rank}{rank}
\DeclareMathOperator{\Pic}{Pic}
\DeclareMathOperator{\ch}{char}
\DeclareMathOperator{\mult}{mult}
\DeclareMathOperator{\Nonsing}{Nonsing}
\DeclareMathOperator{\Exc}{Exc}
\DeclareMathOperator{\Bs}{Bs}
\DeclareMathOperator{\red}{red}
\DeclareMathOperator{\rad}{Rad}
\DeclareMathOperator{\Bir}{Bir}
\DeclareMathOperator{\Div}{Div}
\DeclareMathOperator{\Hom}{Hom}
\DeclareMathOperator{\Supp}{Supp}
\newcommand{\tefrac}{\textstyle\frac}
\newcommand{\tesum}{\textstyle\sum}
\renewcommand{\ge}{\geqslant}
\renewcommand{\le}{\leqslant}
\newcommand{\kbar}{\overline{k}}
\newcommand{\Eamb}{E_{\mathrm{amb}}}
\newcommand{\Xtil}{\widetilde{X}}
\theoremstyle{plain}
\newtheorem{thm}{Theorem}[section]
\newtheorem*{prop*}{Proposition}
\theoremstyle{definition}
\newtheorem{definition}[thm]{Definition}
\newtheorem{rk}[thm]{Remark}
\newtheorem*{definition*}{Definition}
\newcommand{\XmuH}{(X, \tefrac{1}{\mu} \cH)}
\newcommand{\KXmuH}{K_X + \tefrac{1}{\mu} \cH}
\newcommand{\qeq}{\sim_{\Q}}
\title{Elliptic fibrations on cubic surfaces}
\author{Gavin Brown \and Daniel Ryder}
\date{19 June 2008}
\begin{document}
\maketitle

\begin{abstract}
We classify elliptic fibrations birational to a nonsingular,
minimal cubic surface over a field of characteristic zero.
Our proof is adapted to provide computational techniques for
the analysis of such fibrations, and we describe an implementation
of this analysis in computer algebra.
\end{abstract}

\setcounter{tocdepth}{2}

\section{Introduction}

Let $X\subset\P^3$ be a nonsingular projective cubic surface
over a field~$k$ of characteristic zero.  An \emph{elliptic
fibration on~$X$}, sometimes called an \emph{elliptic fibration
birational to~$X$}, is a dominant rational map $\phi\colon X\broken B$
to a normal variety~$B$, where $\phi$ is defined over~$k$, it has
connected fibres, and its general geometric fibre
is birational to a curve of genus~1.

We describe in Section~\ref{sec!hal} a class of elliptic fibrations
called {\em Halphen fibrations}.  Conversely, given an elliptic
fibration on a \emph{minimal\/}~$X$ (see below)
we relate it to an Halphen fibration as follows:
\begin{thm}
\label{thm!main}
Let $X \subset \PP^3$ be a nonsingular, minimal
cubic surface over a field~$k$ of characteristic zero.
If $\phi \colon X \broken B$
is an elliptic fibration on~$X$ then $B\cong\PP^1$
and there exists a composite
\[ \xymatrix{ X \ar@{-->}[r]^{i_s} & X
\ar@{-->}[r]^{i_{s-1}} & \quad \cdots \quad
\ar@{-->}[r]^{i_2} & X \ar@{-->}[r]^{i_1} & X } \]
of birational selfmaps of~$X$, each of which is a Geiser
or Bertini involution, such that $\;\phi \circ i_1 \circ
\cdots \circ i_s \colon X \broken B\cong\PP^1\;$ is an Halphen fibration.
\end{thm}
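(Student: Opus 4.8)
The plan is to run a Sarkisov/Noether--Fano argument on the pair $\XmuH$, where $\cH$ is the mobile linear system defining $\phi$ (the pullback of a suitable system on $B$) and $\mu$ is chosen so that $\KXmuH \qeq 0$ relative to~$B$. Minimality of $X$ forces $\Pic(X)^{\mathrm{Gal}} = \Z\cdot(-K_X)$ with $-K_X = \cO_X(1)$, so every mobile system has a well-defined rational \emph{degree} $\mu > 0$ with $\cH \qeq -\mu K_X$; this number is the invariant that will drive the whole argument. Since the general geometric fibre of $\phi$ is a curve and $X$ is geometrically rational, $B$ is a normal, geometrically rational curve, hence a conic. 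I would postpone the identification $B\cong\PP^1$ to the very end and deduce it from the Halphen structure rather than prove it by hand.

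First I would lay out the two elementary links available on a minimal cubic. Blowing up a $k$-rational point $P\in X$ not lying on a line yields a del Pezzo surface of degree~$2$, whose anticanonical morphism is a double cover of $\PP^2$; the deck involution descends to a birational involution of $X$, the \emph{Geiser involution} centred at $P$. Blowing up a Galois-stable pair of points (a closed point of degree~$2$, or two conjugate rational points) yields a del Pezzo of degree~$1$, whose bianticanonical morphism double-covers a quadric cone, and whose deck involution descends to the \emph{Bertini involution}. For each I would record the action on $\Pic(X)$, and in particular the effect on the degree $\mu$ and on the multiplicities of $\cH$ at the centre; these are exactly the data the Noether--Fano inequality compares.

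The core of the proof is then a dichotomy, obtained by applying the Noether--Fano--Iskovskikh criterion $\mathrm{Gal}$-equivariantly. If $\XmuH$ is canonical, I would argue that $\phi$ is \emph{already} an Halphen fibration in the sense of Section~\ref{sec!hal}: the adjoint relation $\KXmuH\qeq 0$ together with canonicity forces the general member of $\cH$ to be a genus-$1$ curve realised with the prescribed anticanonical base behaviour. If instead the pair is \emph{not} canonical, the criterion produces a maximal centre, which on a minimal cubic surface must be a Galois orbit of points of precisely the two types above; untwisting it by the corresponding Geiser or Bertini involution $i$ replaces $\cH$ by $i_*\cH$ and, by the Noether--Fano inequality, strictly decreases $\mu$ (or its Sarkisov refinement). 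Iterating produces the sequence $i_1,\dots,i_s$; termination holds because $\mu$ ranges over a discrete set of positive rationals and drops at each step. At the end the transformed pair is canonical, hence an Halphen fibration, and since every Halphen fibration has base $\PP^1$ and the target $B$ is unchanged by composition with self-maps of $X$, we conclude $B\cong\PP^1$.

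The step I expect to be the main obstacle is the arithmetic bookkeeping inside the Noether--Fano analysis: one must show that on a \emph{minimal} cubic surface the only maximal centres are the Galois orbits whose blow-ups give del Pezzo surfaces of degree $2$ or $1$, excluding centres lying on lines and orbits of the wrong length by using minimality to pin down $\Pic(X)^{\mathrm{Gal}}$. Establishing the precise inequality over a non-closed field, while tracking the Galois-invariance of both the maximal centre and the resulting system $i_*\cH$ and extracting a strict enough decrease to guarantee termination, is where the genuine work lies. By contrast, identifying the canonical case with the Halphen fibrations of Section~\ref{sec!hal}, and hence reading off $B\cong\PP^1$, should be comparatively formal once the untwisting machinery is in place.
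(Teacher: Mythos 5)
Your skeleton --- giving $\cH$ the integer degree $\mu$ via $\Pic(X)=\Z(-K_X)$, running the Noether--Fano dichotomy, untwisting a non-canonical pair at its maximal centre (whose degree is forced to be $1$ or $2$ by $3\mu^2=\cH^2\ge m_i^2d_i>\mu^2d_i$) by the Geiser or Bertini involution, and inducting on $\mu$ --- is exactly the structure of the paper's proof, and you are right that the untwisting inequality itself can be cited (Lemma~2.9.3 of \cite{CPR}). But you have the location of the real work inverted, and the two steps you dismiss as formal are genuine gaps. The claim ``canonical $\Rightarrow$ Halphen'' is the bulk of the paper's argument, not a formality. Canonicity plus the fibration relations $\cH_W^2=K_W\cH_W=0$ only give that every base multiplicity equals $\mu$ and that $\sum_i d_i=3$; to conclude the fibration is \emph{Halphen} one must actually exhibit Halphen data, i.e.\ a curve $G\in|A|$ irreducible over $k$ and an effective degree-$3$ divisor $D$ on $G$ with $A_{|G}-D$ torsion of order exactly $\mu$ in $\Pic(G)$, and identify $\cH$ with the system of Definition~\ref{def!H}. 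The paper gets $G$ from Kodaira's canonical bundle formula applied to the resolved morphism $\psi\colon W\to\PP^1$: since $\chi(\cO_W)=1$, the formula forces a \emph{single} multiple fibre $\mu G_1$, which is Galois-invariant and hence defined over $k$; then $G=f_*(G_1)$, the divisor $D=\sum\ell_iP_i$ is assembled from the base points with careful bookkeeping in the infinitely-near cases (A1/A2/A3/B/C of Definition~\ref{def!res}), the local intersection computation $i_{P_i}(H,G)=\ell_i\mult_{P_i}(H)$ verifies $\cO_G(\mu D)\cong\cO_G(\mu A)$, and a final pencil argument pins the index of $(G,D)$ to be exactly $\mu$. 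Your proposal contains none of this and offers no substitute.

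The second gap is the postponement of $B\cong\PP^1$. Your plan is to deduce it at the end from the Halphen structure, but the canonical-case analysis cannot even start without the pencil structure: if $B$ were a conic with no rational point, the system defining $\phi$ would be the pullback of the degree-$2$ system on $B$, a \emph{net} whose general member splits over $\kbar$ as a pair of conjugate genus-$1$ curves, so ``the general member of $\cH$ is a genus-$1$ curve'' is simply false in that case, and the identification with an Halphen fibration (from which you want to read off $B\cong\PP^1$) never gets off the ground. The numerics (\,$\cH_W^2=K_W\cH_W=0$, $\sum d_i=3$\,) do not exclude a pointless conic; some arithmetic input is required. The paper supplies it up front (Section~\ref{sec!B}): a general $H\in\phi^*|H_B|$ splits over $\kbar$ as $D_1+D_2$ with $D_1\sim D_2$ conjugate, so the class of $D_1$ is Galois-invariant, hence lies in $\Pic(X)$; then $H\sim 2F$ with $F$ effective over $k$, and $|F|$ factorises $\phi$, forcing a rational point on $B$. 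Some such descent argument (or, alternatively, a parity argument via Kodaira's formula, whose moduli part has odd degree $\chi(\cO_W)=1$ while a pointless conic carries only even-degree classes) must be made explicitly; your proposal has none, and as written the final step is circular.
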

Geiser and Bertini involutions are birational selfmaps
of~$X$ described in Section~\ref{sec!GB}.
This result is proved in Cheltsov \cite{Ch} and independently
in the unpublished~\cite{R00}.  Our aims and methods are
different from those of~\cite{Ch}, however: we seek to be as
explicit as possible, and we have implemented algorithms in the
computational algebra system
\magma~\cite{Ma} for Halphen fibrations
and Geiser and Bertini involutions.
Our code is available at~\cite{BR}.

All varieties, subschemes, maps and linear
systems are defined over the fixed field~$k$ of characteristic zero,
except where a different field is mentioned explicitly.

\paragraph{Contents of the paper.}
In the remainder of the introduction we discuss motivation and background
for the problem.  We build Halphen fibrations
on~$X$ in Section~\ref{sec!con}. In Section~\ref{sec!proof} we
discuss the Noether--Fano--Iskovskikh inequalities and then
prove Theorem~\ref{thm!main}.  Section~\ref{sec!alg} is devoted
to algorithmic considerations and an outline of our implementation,
while Section~\ref{sec!egs} contains worked computer examples.

\paragraph{Cubic surfaces and minimality.}
Throughout this paper, by {\em cubic surface\/} we mean a
nonsingular surface $X\subset\PP^3$ defined by a homogeneous
polynomial of degree~3 with coefficients in~$k$.
We denote $-K_X$ by~$A$.

When $k$~is algebraically closed, it is well known that $X$
contains 27~straight lines
and that these span the Picard group $\Pic(X)\cong\Z^7$.
One quickly deduces that there is a birational map
$X\broken\P^2$; in other words, $X$ is~{\em rational}.  On the
other hand, if $k$ is not algebraically closed, some of these
lines may fail to be defined over~$k$ and the Picard group may
have smaller rank.  Indeed, $\Pic(X)$ is the Galois-invariant
part of~$\Pic(\overline{X})$.
A cubic surface $X$ is {\em minimal\/} if the Picard number of~$X$,
$\rho(X)=\rank\Pic(X)$, is~1.
It is easy to see that if $X$ is minimal then $\Pic(X) = \Z({-K_X})$.

Elliptic fibrations were defined above with apparently arbitrary
base~$B$, but in fact it follows from Iitaka's bound on Kodaira dimension
that $g(B)=0$ for any surface $X$ of Kodaira dimension~$-\infty$;
see \cite{BHPV} Theorem~(18.4). In particular this applies to cubic
surfaces and so we have:
\begin{prop*}
If $X\broken B$ is an elliptic fibration on a cubic surface~$X$
then $g(B)=0$.
\end{prop*}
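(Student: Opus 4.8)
The plan is to reduce everything to the rationality of~$X$ together with the birational invariance of global $1$-forms. First, since the general fibre of~$\phi$ is a curve and $X$ is a surface, the base~$B$ is one-dimensional; being normal, it is a smooth projective curve, so $g(B)$ is defined. I would begin by resolving the indeterminacy of~$\phi$: blowing up its finitely many base points yields a smooth projective surface~$\Xtil$ together with a birational morphism $\pi\colon\Xtil\to X$ and a genuine morphism $\tilde\phi=\phi\circ\pi\colon\Xtil\to B$, whose general fibre is again a smooth curve of genus~$1$. All of the numerical quantities below are unchanged under extension of scalars, so I may as well work over~$\kbar$ and prove $g(B)=0$ there.

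Next I would record the only global input needed about~$X$. As recalled in the excerpt, a nonsingular cubic surface is rational over~$\kbar$; in particular $\Xtil$ is rational, i.e.\ birational to~$\P^2$. Since $h^0(-,\Omega^1)$ is a birational invariant of smooth projective varieties in characteristic zero, it follows that $h^0(\Xtil,\Omega^1_{\Xtil})=h^0(\P^2,\Omega^1_{\P^2})=0$; that is, $\Xtil$ carries no nonzero global holomorphic $1$-form.

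The contradiction then comes from pulling back forms along~$\tilde\phi$. Suppose $g(B)\ge 1$. Then $h^0(B,\Omega^1_B)=g(B)\ge 1$, so there is a nonzero global $1$-form~$\omega$ on~$B$. Because $\tilde\phi$ is dominant and we are in characteristic zero, $\tilde\phi$ is generically smooth, so the cotangent map $\tilde\phi^{*}\Omega^1_B\to\Omega^1_{\Xtil}$ is injective at a general point; hence $\tilde\phi^{*}\omega$ is a nonzero global holomorphic $1$-form on~$\Xtil$, contradicting the vanishing just established. Therefore $g(B)=0$, as claimed. One may instead run the route hinted at in the statement, using the subadditivity of Kodaira dimension for a surface fibred over a curve (Iitaka's bound, the inequality $\kappa(\Xtil)\ge\kappa(B)+\kappa(F)$): here $\kappa(F)=0$ as $F$ has genus~$1$, while $\kappa(\Xtil)=-\infty$ by rationality, forcing $\kappa(B)=-\infty$ and so $g(B)=0$.

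I expect the only genuine subtlety to be a bookkeeping one about which hypothesis is actually doing the work. The bare condition $\kappa(X)=-\infty$ does \emph{not} by itself give $g(B)=0$, since a ruled surface over a curve of positive genus is a counterexample; the argument must therefore invoke either the rationality of~$X$, as in the $1$-form proof above, or the genus-$1$ condition on the fibres, as in the subadditivity proof. The remaining point requiring care is the nonvanishing of $\tilde\phi^{*}\omega$, equivalently the separability of~$\tilde\phi$, which is exactly where characteristic zero enters.
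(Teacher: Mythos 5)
Your proposal is correct, and your primary argument is genuinely different from the paper's. The paper disposes of this proposition in one sentence: it invokes Iitaka's bound on Kodaira dimension (subadditivity for a surface fibred over a curve, \cite{BHPV} Theorem~(18.4)); with $\kappa(X)=-\infty$ and $\kappa(F)=0$ for the genus-one fibre, the inequality $\kappa(\Xtil)\ge\kappa(F)+\kappa(B)$ forces $\kappa(B)=-\infty$, i.e.\ $g(B)=0$ --- which is exactly the alternative route you sketch in your last paragraph. Your main argument instead uses rationality of the cubic surface over $\kbar$ together with the birational invariance of $h^0(\Omega^1)$: a dominant map in characteristic zero onto a positive-genus curve would pull back a nonzero global $1$-form onto a rational surface, which carries none. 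This is more elementary (generic smoothness plus invariance of the irregularity, rather than the subadditivity theorem), but it is also strictly less general: it truly needs rationality, not just $\kappa=-\infty$. For example, $\P^1\times E$ with $E$ elliptic has $\kappa=-\infty$, carries an elliptic fibration over $\P^1$, and has irregularity $1$, so the $1$-form argument is silent there, while the paper's cited inequality still applies and correctly yields a genus-zero base. For cubic surfaces both arguments are valid, and your closing ``bookkeeping'' observation --- that the bare hypothesis $\kappa(X)=-\infty$ is insufficient and one must use either rationality or the genus-one fibres --- is exactly the right reading of the paper's slightly loose phrase ``for any surface $X$ of Kodaira dimension $-\infty$.''
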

\noindent
There remains the question of whether $B$ has a $k$-rational
point, that is, whether $B\cong\PP^1$ over~$k$; we return
to this in Section~\ref{sec!B}.

\paragraph{Geometric motivation.}
Our main motivation for studying elliptic fibrations on cubic
surfaces is geometric. This is best explained from a broader perspective.

A {\em Fano $n$-fold\/} is a
normal projective variety~$X$ of dimension~$n$, with at worst
$\Q$-factorial terminal singularities and Picard number~1, such
that ${-K_X}$ is ample.
A fundamental question in Mori theory is whether a given Fano
$n$-fold~$X$ admits  birational maps to other Mori fibre
spaces --- see~\cite{Co} for a discussion, noting that a key example
of such  {\em birational non-rigidity\/} is a rational
map $\phi\colon X\broken S$ whose generic fibre is a curve of
genus~0 rather than~1.  We regard the search for elliptic fibrations
as a limiting case in Mori theory --- a
point of view we learned from papers of 
Iskovskikh~\cite{Is} and Cheltsov~\cite{Ch},
and one that becomes clearer
when we discuss the Noether--Fano--Iskovskikh inequalities in
Section~\ref{sec!prelim}.  For more on how our problem fits into
modern birational geometry, see~\cite{Is} and the
introduction to~\cite{CPR}.

\paragraph{Arithmetic motivation.}
Cases of the more general
problem of classifying elliptic fibrations on Fano varieties also
have arithmetic applications.  From this point of view a
cubic surface is a baby case; but scaled-up versions
of our methods attack, for example, the same problem for some Fano
3-folds, see \cite{Ch} and~\cite{R06}.

In arithmetic a basic question concerning Fano varieties is the
existence, or at least potential density, of rational points.
Elliptic fibrations offer one approach; see Bogomolov and
Tschinkel~\cite{BT}, for instance.

\paragraph{History.}
In contrast to the modern motivation, some of the methods are ancient.
In his paper~\cite{H} of~1882 Halphen considered the problem of
finding a plane curve~$G$ of degree~6 with 9~prescribed double
points $P_1,\ldots,P_9$. The question is: for which collections of
points~$\{P_i\}$ is there a solution
apart from $G = 2C$, where $C$~is the (in general unique)
cubic containing all the~$P_i$?  Halphen's answer is that $C$
must indeed be unique and --- in modern language and supposing
for simplicity that $C$ is nonsingular, so
elliptic --- $P_1\oplus\cdots\oplus P_9$ must be a nonzero
2-torsion point of~$C$, where any inflection point is chosen as
the zero for the group law.  He proceeds to consider higher
torsion as well.  Translated to a cubic surface, this is
essentially Theorem~\ref{thm!constr}.  A natural next step is the
result analogous to Theorem~\ref{thm!main} for~$X=\PP^2$, and
this was proved by Dolgachev~\cite{D} in~1966.

The approach of~\cite{Ch} to Theorem~\ref{thm!main}
is considerably more highbrow than
ours: he uses general properties of mobile
log pairs and does not spell out the construction of elliptic fibrations
in detail. The paper~\cite{R00}, on the other hand, was originally
conceived as a test case for~\cite{R02}
and~\cite{R06}, which concern similar problems for Fano 3-folds.

\paragraph{Acknowledgments.}
It is our pleasure to thank Professors Andrew Kresch and Miles Reid
for their help with some finer points of arithmetic and Professor Josef Schicho
for a preview of his new \magma\ package to compute the Picard group
of a cubic surface over a non-closed field.

\section{Constructing elliptic fibrations}
\label{sec!con}

We fix a nonsingular, minimal cubic surface~$X$ defined over~$k$,
with $A=-K_X$.  Linear equivalence of divisors is denoted
by~$\sim$ and $\Q$-linear equivalence by~$\sim_{\Q}$.

\subsection{Halphen fibrations}
\label{sec!hal}

The simplest elliptic fibrations arise as the pencil
of planes through a given line. That is, if $L=(f=g=0)$
is a line in~$\P^3$ defined by two independent linear
forms~$f,g$ and not lying wholly in~$X$, then the map
$\phi = (f,g)$ is an elliptic fibration.
In this section we make a larger class of fibrations which
includes these linear fibrations as a simple case.
\begin{definition}
\label{def!hal}
A pair $(G,D)$ is called {\em Halphen data on~$X$} when 
$G\in|A|$ is (reduced and) irreducible over~$k$ and 
$D\in\Div(G)$ is an effective $k$-rational divisor of degree~3,
supported in the nonsingular locus of~$G$, satisfying
$\cO_G(\mu D)\cong \cO_G(\mu A)$ for some integer $\mu\ge 1$.
The smallest such $\mu\ge1$ is called the \emph{index} of~$(G,D)$.
\end{definition}

Since $X$ is minimal, $G$ may be any irreducible plane cubic or the
union of three conjugate lines (it is required to be irreducible
over~$k$, not over~$\kbar$).  Since $\Supp(D) \subset
\Nonsing(G)$, the sheaf isomorphism condition says that
$A_{|G}-D$ is a torsion class of order~$\mu$ in~$\Pic(G)$.

\begin{definition} \label{def!res}
Let $(G,D)$ be Halphen data on~$X$.
The {\em resolution of~$(G,D)$\/} is the blowup $\pi\colon
Y\rightarrow X$ of a set of up to three points~$P_i$ that lie on
varieties dominating~$X$ and are determined as follows:
\begin{itemize}
\item[A1.]  If $D$ is a sum of distinct $k$-rational points of~$G$
then let $\{P_1,P_2,P_3\}=\Supp(D)$ (as points of~$X$)
and let $\pi$~be the blowup of these points.
\item[A2.]  If $D = p + 2q$, where $p \ne q$ are $k$-rational
points of~$G$, then let $P_1 = p$ and $P_2 = q$ (as points
of~$X$); also let $\xi \colon Y' \to X$ be the blowup of these
points and let $E_2'$ be the exceptional curve lying over~$P_2$.
Now define $P_3$ to be the point $G' \cap E_2'$ on~$Y'$, where
$G'$ is the strict transform of~$G$; let $\omicron \colon Y \to
Y'$ be the blowup of~$P_3$ and set $\pi = \xi \circ \omicron$.
\item[A3.]  If $D = 3p$ with $p$ a $k$-rational point of~$G$ then
let $P_1 = p$ and let $\nu \colon Y' \to X$ be the blowup
of~$P_1$.  Next define $P_2 = E_1' \cap G'$ where $E_1',G'
\subset Y'$ are respectively the exceptional curve of~$\nu$ and
the strict transform of~$G$, and let $\xi \colon Y'' \to Y'$ be
the blowup of~$P_2$.  Now, similarly, define $P_3 = E_2'' \cap
G''$ where $E_2'',G'' \subset Y''$ are respectively the
exceptional curve of~$\xi$ and the strict transform of~$G'$.
Finally let $\omicron \colon Y \to Y''$ be the blowup of~$P_3$ and
let $\pi = \nu \circ \xi \circ \omicron \colon Y \to X$.
\item[B.]  If $D = p_1 + p_2$ with $p_1$ a $k$-rational
point of~$G$ and $\deg(p_2) = 2$ then let $P_i = p_i$ for $i =
1,\,2$ and let $\pi \colon Y \to X$ be  the blowup of~$P_1$
and~$P_2$.
\item[C.]  If $D = p$, a single $k$-closed point of~$G$ of
degree~3, then let $P_1 = p$ and let $\pi\colon Y\rightarrow X$
be the blowup of~$P_1$.
\end{itemize}
In each case we fix the following notation: let $E_i \subset Y$
be the \emph{total\/} transform on~$Y$ of the exceptional curve
over~$P_i$.  So in case~A2, for example, $E_2 = \omicron^*(E_2')
= E_2'' + E_3$ has two irreducible components, $E_2'' =
\omicron^{-1}_*(E_2')$ and $E_3 = \Exc(\omicron)$.  Furthermore
let $E = \sum_i E_i$, the relative canonical class of~$\pi$.
\end{definition}

It can easily be checked in the above definition that $E_i$ is
the reduced preimage of~$P_i$ on~$Y$.  Note, though, that this is
a consequence of our positioning of each subsequently-defined
$P_j$ on the strict transform of~$G$; the corresponding statement
no longer holds, for example, in the closely related
notation of Section~\ref{sec!prelim} below.

\begin{definition} \label{def!H}
Let $(G,D)$ be Halphen data on~$X$ of index~$\mu$, and let
$\pi\colon Y\rightarrow X$ be the resolution introduced above
with relative canonical class~$E$.
We define $\cH_Y$ to be the linear
system $|\mu \pi^*(A) - \mu E|$ on~$Y$.
The {\em Halphen system $\cH$ associated to $(G,D)$} is the
birational transform of~$\cH_Y$ on~$X$.
\end{definition}

Notice that $\cH$ is the set of divisors in~$|\mu A|$
that have multiplicity $\mu$ at every point~$P_i$.
It would be natural to write $\cH = |\mu A - \mu D|$, but
we don't.

\begin{thm} \label{thm!constr}
Let $(G,D)$ be Halphen data on~$X$ of index~$\mu$, and let
$\cH$ be the linear system described in Definition~\ref{def!H}.
Then $\cH$ is a mobile pencil, and the rational map $\phi =\phi_\cH$
is an elliptic fibration $\phi\colon X \broken \PP^1$
that has $\mu G$ as a fibre.
The set-theoretic base locus of~$\phi$ is~$\Supp(D)$ and
the resolution of~$(G,D)$ is its minimal resolution of indeterminacies.
\end{thm}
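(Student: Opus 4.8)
The plan is to transfer everything to the resolution $\pi\colon Y\to X$ and to recognise $\cH_Y$ as a multiple of the anticanonical system. Since $E$ is the relative canonical class, $K_Y=\pi^*K_X+E=-\pi^*A+E$, so $\cH_Y=|\mu\pi^*A-\mu E|=|{-}\mu K_Y|$. First I would check that the strict transform $G_Y$ of $G$ satisfies $G_Y=\pi^*A-E$, that is, $G_Y\in|{-}K_Y|$; this holds because $G$ is smooth at each $P_i$ and each infinitely near centre was placed on the strict transform of $G$, so every blowup drops the coefficient by exactly one. Adjunction then gives $K_{G_Y}=(K_Y+G_Y)|_{G_Y}\cong\cO_{G_Y}$, so $G_Y$ has arithmetic genus $1$, and $\pi$ restricts to an isomorphism $G_Y\xrightarrow{\sim}G$. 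The key geometric input is the identification $(-K_Y)|_{G_Y}\cong\cO_G(A_{|G}-D)$: tracking how the components of $E$ meet $G_Y$ in each of the configurations A1--C shows that $(\pi|_{G_Y})_*(E|_{G_Y})=D$. By the Halphen condition this degree-$0$ class has exact order $\mu$ in $\Pic(G)$, so $\cO_{G_Y}(-jK_Y)$ is a nontrivial degree-$0$ bundle for $1\le j<\mu$ and becomes trivial at $j=\mu$.

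Next I would run a cohomological bootstrap along the restriction sequences
\[ 0\to\cO_Y(-(j-1)K_Y)\to\cO_Y(-jK_Y)\to\cO_{G_Y}(-jK_Y)\to0, \]
using $-jK_Y-G_Y=-(j-1)K_Y$. Since $Y$ is rational, $\chi(\cO_Y)=1$ and $H^1(\cO_Y)=0$; Riemann--Roch on the arithmetic-genus-$1$ curve $G_Y$ gives $h^0=h^1$ for every degree-$0$ line bundle, this common value being $1$ exactly when the bundle is trivial and $0$ otherwise. Feeding the order-$\mu$ statement into the long exact sequences inductively yields $h^0(-jK_Y)=1$ and $H^1(-jK_Y)=0$ for $0\le j\le\mu-1$ (so the only member of $|{-}jK_Y|$ is $jG_Y$), and then at $j=\mu$ the now-trivial restriction contributes one further section, giving $h^0(-\mu K_Y)=2$. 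Hence $\cH_Y=|{-}\mu K_Y|$ is a pencil containing $\mu G_Y$. A routine vanishing $H^0(\cO_Y(mK_Y))=0$ for $m\ge1$, obtained by intersecting the nonzero effective class $-K_Y=G_Y$ with an ample class, feeds these sequences and the connectedness step below.

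To see that $\phi$ is a fibration I would first show $\cH_Y$ has no fixed part. Any fixed component is dominated by the member $\mu G_Y$, hence of the shape $aG_Y$ with $0\le a\le\mu$ by $k$-rationality (over $\kbar$ the three conjugate lines of $G_Y$ are permuted transitively); but $a\ge1$ would force $h^0(-\mu K_Y)=h^0(-(\mu-a)K_Y)=1$, contradicting $h^0=2$. Since $(-\mu K_Y)^2=\mu^2K_Y^2=0$ and there is no fixed part, the pencil has no base points, so it defines a morphism $\psi\colon Y\to\PP^1$ over $k$. A general fibre $H$ is smooth of arithmetic genus $1$ by adjunction, as $2p_a-2=(-\mu K_Y)(-(\mu-1)K_Y)=0$, and the sequence $0\to\cO_Y(\mu K_Y)\to\cO_Y\to\cO_H\to0$ together with $H^0(\mu K_Y)=0$ and the vanishing $H^1(\cO_Y(\mu K_Y))=0$ (Serre-dual to $H^1(-(\mu-1)K_Y)=0$) gives $h^0(\cO_H)=1$, so the fibres are connected. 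Thus $\psi$ is an elliptic fibration with base $\PP^1$. Pushing forward by $\pi$, the system $\cH=\pi_*\cH_Y\subset|\mu A|$ is a mobile pencil, $\phi=\psi\circ\pi^{-1}$ has $\mu G$ as its fibre over $[\mu G_Y]$, and its set-theoretic base locus is the image of the exceptional locus, namely $\Supp(D)$. Finally $\pi$ resolves $\phi$ and is minimal, because each centre $P_j$ is a genuine base point of the transformed system at its stage while $\cH_Y$ is already base-point free, so no blowup is superfluous.

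The main obstacle I anticipate is the case-by-case bookkeeping of the first paragraph: verifying in each configuration A1--C that the centres sit simply on the successive strict transforms of $G$ (so that $G_Y\in|{-}K_Y|$) and that the components of $E$ restrict to $G_Y$ so as to recover exactly $D$ (so that $(-K_Y)|_{G_Y}\cong\cO_G(A_{|G}-D)$). The infinitely near cases A2 and A3, and the degree-$2$ and degree-$3$ closed points in B and C, require the most care, as does the fact that $G$ may be geometrically reducible as three conjugate lines; one must confirm that the arithmetic-genus-$1$ cohomology used in the bootstrap is uniform across the smooth, nodal, and N\'eron-polygon cases. Once this identification and the order-$\mu$ statement are in hand, the cohomological and mobility arguments are essentially formal.
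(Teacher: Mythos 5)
Your argument is correct (the case-by-case check you defer does go through), but it takes a genuinely different route from the paper's. The paper never computes cohomology on the full resolution $Y$: for cases A1, B and C it works on $X$ itself, mapping the ideal-sheaf sequence $0\to\cF(\mu A)\to\cO_X(\mu A)\to\cG\to 0$, with $\cF=\bigotimes_i\cI_{P_i}^{\mu}$, onto the restriction sequence of $\mu D\subset G$; Kodaira vanishing and Serre duality on $G$ force the connecting maps to be surjective, hence $H^1(X,\cF(\mu A))\neq 0$, and an explicit Riemann--Roch count $h^0(\cO_X(\mu A))=3\mu(\mu+1)/2+1\ge h^0(\cG)+1$ then gives $h^0(\cF(\mu A))\ge 2$; cases A2 and A3 are handled by repeating the count on partial blowups $X'$. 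Mobility, the pencil property and connectedness of fibres are all then extracted from the minimality of the index $\mu$. You instead transfer everything upstairs, recognise $\cH_Y=|{-}\mu K_Y|$ and $G_Y\in|{-}K_Y|$ with $({-}K_Y)|_{G_Y}$ of exact order $\mu$, and run the classical Halphen-pencil bootstrap through the restriction sequences. This buys sharper and more uniform conclusions: exact dimensions ($h^0(-jK_Y)=1$ for $j<\mu$, $h^0(-\mu K_Y)=2$ rather than just $\ge 2$), an $H^1$-vanishing that yields connected fibres by Serre duality where the paper again invokes minimality of $\mu$, a single argument covering A1--C, and an explicit treatment of the final claims that $\Bs(\phi)=\Supp(D)$ and that the resolution is minimal, which the paper's written proof leaves implicit. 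The price is the geometric identification $(\pi|_{G_Y})_*\bigl(E|_{G_Y}\bigr)=D$ in each configuration, which you rightly flag as the crux: for instance in case A2, with the paper's notation $E=E_1+E_2+E_3=E_1+E_2''+2E_3$, one checks that $E_2''$ is disjoint from $G_Y$ after the last blowup, so $E|_{G_Y}$ is the point over $p$ plus twice the point over $q$, pushing forward to $p+2q=D$; this bookkeeping is of the same order as the paper's repeated dimension counts for A2/A3. Conversely, the paper's route stays close to linear algebra on $X$ and its partial blowups (matching its computational aims of imposing point conditions on $|\mu A|$) and uses only the triviality of $\cO_G(\mu A-\mu D)$ inside the cohomology, reserving exactness of the index for the mobility step.
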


Following Cheltsov \cite{Ch}, fibrations $\phi_\cH$ arising in this
way are called {\em Halphen fibrations}. 
We give the proof of this theorem in Section~\ref{sec!pf}.

\subsection{Twisting by Geiser and Bertini involutions}
\label{sec!GB}

Not all elliptic fibrations are Halphen: we can precompose, or
{\em twist}, Halphen fibrations by elements of~$\Bir(X)$, and
usually the result will have more than three basepoints (counted
with degree).

We describe two particular classes of birational selfmap
of~$X$: Geiser and Bertini involutions, also described at greater
length in \cite{CPR} Section~2.
In fact, the group $\Bir(X)$ (in the case of minimal~$X$) is
generated by Geiser and Bertini involutions together with all
regular automorphisms, although we do not use this fact
explicitly; see \cite{M}~Chapter~5.

\paragraph{Geiser involutions.}
Let $P\in X$ be a point of degree~$1$.
We define a birational map $i_P\colon X\broken X$ as follows.
Let $Q$ be a general point of~$X$, and let $L\subset\P^3$ be the line
joining $P$ to~$Q$. Then $L\cap X$ consists of three distinct
points, $P,Q$ and a new point~$R$.  Define $i_P(Q)=R$.
In fact, $i_P$ is the map defined by the linear system~$|2A-3P|$.

\paragraph{Bertini involutions.}
Let $P\in X$ be a point of degree~$2$.
Let $L\subset\P^3$ be the unique line that contains~$P$.
Since $X$ is minimal, $L$ intersects $X$ in $P$ and exactly
one other point $R$ of degree~$1$.
We define a birational map $i_P\colon X\broken X$ as follows.
Let $Q$ be a general point of~$X$.
If $\Pi\cong\P^2$ is the plane spanned by $P$ and~$Q$, then
$C=\Pi\cap X$ is a nonsingular plane cubic curve containing~$R$.
Then $i_P(Q) = {-Q}$, the inverse of~$Q$ in the group law on
$C$ with origin~$R$.
In fact, $i_P$ is the map defined by the linear system~$|5A-6P|$.

\subsection{Proof of Theorem~\ref{thm!constr}}
\label{sec!pf}

\paragraph{Comments about $G$.}

We are given Halphen data $(G,D)$ on~$X$.
The curve $G$ is a Gorenstein scheme
with $\omega_G\cong\cO_G$ and~$\chi(\cO_G)=0$.  

When $\mu>1$, $G$ cannot be a cuspidal cubic since in that case
the Picard group $\Pic(G) \cong \Ga$ is torsion free; here we
use~$\ch(k)=0$. This restriction on~$G$ also follows from
Theorem~\ref{thm!constr}, given Kodaira's classification of
multiple fibres of elliptic fibrations: multiple cusps do not
occur.  Our $G$ may be a nodal cubic (with Picard group~$\Gm$) or
a triangle of conjugate lines (with Picard group an extension
of~$\Z^3$ by~$\Gm$).  If~$\mu = 1$ then $G$ can be cuspidal; but
in this case we are free to re-choose~$G$ as we please from the
pencil~$\cH$ of Definition~\ref{def!H}, so without loss of
generality $G$ is nonsingular.

\begin{proof}[Proof of Theorem~\ref{thm!constr}]
The case $\mu=1$ is trivial, so let $\mu \ge 2$.

Let $\pi\colon Y\rightarrow X$ together with the points~$P_i$ be
the resolution of~$(G,D)$ of Definition~\ref{def!res}.  We have
the Halphen system $\cH$ on~$X$ of Definition~\ref{def!H} and, by
construction,~$\mu G\in \cH$.

Suppose at first that we are in case A1, B or~C.
Define $\cF$ on~$X$ as the tensor product of all~$\cI_{P_i}^{\mu}$.
There is a map between exact sequences of sheaves of $\cO_X$-modules:
\begin{equation} \label{eq!Gdef}
\xymatrix@C=0.8cm{
0 \ar[r] &
\cF(\mu A) \ar[r] \ar[d] &
\cO_X(\mu A) \ar[r] \ar[d] &
\cG \ar[r] \ar[d] &
0
\\
0 \ar[r] &
\cO_G(\mu A-\mu D) \ar[r] &
\cO_G(\mu A) \ar[r] &
\cO_{\mu D}(\mu A) \ar[r] &
0
} \end{equation}
%
where $\cG=\left(\cO_X/\cF\right)\otimes\cO_X(\mu A)$.
(The lefthand vertical arrow is from the definition of~$\cF$, the
central one is clear, and the final one follows from the others.)
By assumption, $\cO_G(\mu A-\mu D)\cong\cO_G$.

Kodaira vanishing shows that $H^1(X,\cO_X(\mu A))=0$.
By Serre duality (since $G$ is Gorenstein) we have
\[
H^1(G,\cO_G(\mu A))
\cong
\Hom(\cO_G(\mu A),\cO_G)^*,
\]
and this $\Hom$ is zero because $A$ is ample on every
component of~$G$.
So, taking cohomology, we have a map between exact
sequences of $k$-vector spaces:
\[ \xymatrix@C=0.4cm{
0 \ar[r] &
H^0(X,\cF(\mu A)) \ar[r] \ar[d] &
H^0(X,\cO_X(\mu A)) \ar[r] \ar[d] &
H^0(X,\cG) \ar[r] \ar[d]^{\beta_1} &
H^1(X,\cF(\mu A)) \ar[r] \ar[d]^{\beta_2} &
0
\\
0 \ar[r] &
H^0(G,\cO_G) \ar[r] &
H^0(G,\cO_G(\mu A)) \ar[r] &
H^0(G,\cO_{\mu D}(\mu A)) \ar[r]^(.55){\alpha} &
H^1(G,\cO_G) \ar[r] &
0
} \]
%
Since both $\alpha$ and~$\beta_1$ are surjective,
we have that $\beta_2$ is surjective.
Now $\chi(\cO_G)=0$ so $h^1(G,\cO_G)=1$ and we conclude
that $H^1(X,\cF(\mu A))\not=0$.

From a local calculation at the geometric points of~$D$ we have
\[
h^0(X,\cG) \le
 3 \begin{pmatrix}\mu+1\\2\end{pmatrix}
\]
and by Riemann--Roch
\[
h^0(X,\cO_X(\mu A))= 3\mu(\mu+1)/2 + 1 \ge h^0(X,\cG)+1.
\]
Thus $h^0(X,\cF(\mu A))\ge 2$.

The linear system $\cH$ is the system associated to $H^0(X,\cF(\mu A))$,
and so it has positive dimension; $\cH_Y$ has the same dimension.
Since $\mu G\in\cH$,
the only possible fixed curve of~$\cH$ is some multiple~$\mu' G$,
but then $(\mu-\mu')G$ contradicts the minimality of~$\mu$;
therefore $\cH$ is mobile.
Let $H_Y\in \cH_Y$ be a general element. Since $H_Y\sim
\mu\pi^*(A) - \mu E$, and~$E^2=3$, we have $H_Y^2=0$.
So the map $\phi_Y=\phi_{\cH_Y}$
is a morphism to a curve. Furthermore, $H_Y\sim -K_Y$ so the general
fibre is a nonsingular curve (over~$k$) with trivial canonical class.
Since $\mu \pi_*^{-1}(G)$ is a fibre of~$\phi_Y$, the image
curve~$B$ has a rational point~$Q\in B$.  The minimality of~$\mu$
implies that $\cH_Y$ is the pencil $\phi_Y^*|\cO_{B}(Q)|$.

In cases A2 and~A3, we make similar calculations on a blowup of~$X$.
For example, in case~A2 let $\tau \colon X' \rightarrow X$ be the
blowup of~$P_2$ with exceptional curve~$L$.
Define $G'$ and~$\cH'$ to be the birational
transforms on~$X'$ of $G$ and~$\cH$ respectively.
The point $P_3$ lies on~$X'$, and we identify $P_1$ with its
preimage under~$\tau$.
Let $A' = \tau^*A - L$, and let $D' = P_1 + 2P_3$ as a divisor on~$G'$.

Define $\cF'$ as the sheaf $\cI_{P_1}^{\mu}\otimes \cI_{P_3}^{\mu}$ on~$X'$.
There is a map between exact sequences of sheaves of $\cO_{X'}$-modules
analagous to~\eqref{eq!Gdef} above (involving $A'$, $G'$, etc.)\
with $\cG'=\left(\cO_{X'}/\cF'\right)\otimes\cO_{X'}(\mu A')$.
Since $A'\sim -K_{X'}$, the argument works as before in cohomology,
with the conclusion that $H^1(X',\cF'(\mu A'))\not=0$.
The dimension calculation differs slightly, giving instead that
\[
h^0(X',\cG') \le
 2 \begin{pmatrix}\mu+1\\2\end{pmatrix}
\]
and
$h^0(X',\cO_{X'}(\mu A'))=
2\mu(\mu+1)/2 + 1 \ge
h^0(X',\cG')+1$.
The conclusion is again that $h^0(X',\cF'(\mu A'))\ge 2$,
and the rest of the proof follows verbatim.
In case~A3, the only change is again the dimension calculation.
\end{proof}

\section{Proof of the main theorem}
\label{sec!proof}

Let $\phi \colon X \broken B$ be as in the statement of
Theorem~\ref{thm!main}.

\subsection{Rationality of the base}
\label{sec!B}

Let $H_B$ be a very ample divisor on~$B$. We may choose it to
have minimal possible degree; since $B$ has genus~$0$, this is
either $1$ or~$2$. We first show that in fact the minimal degree
is always~1, so that~$B\cong\PP^1$.

Suppose $\deg H_B=2$; in particular, this means that $B$ has no
rational points.  We let $\cH=\phi^*|H_B|$. A general element
$H\in\cH$ splits over~$\kbar$ as a sum $D_1+D_2$ of two conjugate
curves each of genus~$1$.  Over $\kbar$, $D_1\sim D_2$, so the
class of~$D_1$ in $\Pic(\overline{X})$ is Galois invariant.  In
particular, $D_1$ defines a divisor class in~$\Pic(X)$ over~$k$.
So $H$ is divisible by~$2$ in~$\Pic(X)$: say $H\sim 2F$ where $F$
is an effective divisor defined over~$k$.
So $F\sim D_1$ over~$\kbar$ and therefore, over~$k$,
$|F|$ determines a map $X\broken\PP^1$ which factorises~$\phi$.
So $B$ has a rational point, contrary to our assumption.

\subsection{More preliminaries}
\label{sec!prelim}

We know now that $B$ has a rational point, so we may assume $B=\PP^1$.
We denote by~$\cH$ the mobile linear system $\phi^*|\cO_{\PP^1}(1)|$,
a linear system that defines~$\phi$.
Since $X$ is minimal, $\cH \subset |\mu A|$ for some fixed~$\mu\in\N$.
The anticanonical degree $\mu$ is also denoted~$\deg\cH$.

Let $P_1,\ldots,P_r$ be the distinct basepoints
of~$\cH$ and $m_1,\ldots,m_r \in \N$ their multiplicities: so a
general $C \in \cH$ has $\mult_{P_i}(C) = m_i$ for all~$i$.
The list $P_1,\ldots,P_r$ may include infinitely near basepoints
that lie on surfaces dominating~$X$; compare with Definition~\ref{def!res}.
Note that any $P_i$ may have degree greater than~1.

Let $f \colon W \to X$ be the blowup (in any appropriate order)
of all the~$P_i$; $f$~is a minimal resolution of indeterminacy
for~$\phi$.  We denote by~$E_i$ the \emph{total transform\/}
on~$W$ of the exceptional curve over~$P_i$: that is, if $L$ is the
exceptional curve of the blowup of~$P_i$ then $E_i$ is the total
transform of~$L$ on~$W$.
(Note that $E_i$ may be reducible or even nonreduced.)
Then denoting $\deg P_i$ by~$d_i$, we have
\begin{equation} \label{eqns!EiEj}
E_i^2 = -d_i \mbox{\quad and \quad} E_i
E_j = 0 \mbox{\;\; for $i \ne j$.}
\end{equation}
With this notation, the adjunction formula for~$f$ reads
\begin{equation}
\label{eq!KW}
K_W \sim f^*K_X + E_1 + \cdots + E_r
\end{equation}
and the birational transform $\cH_W$ of~$\cH$ on~$W$ satisfies
\begin{equation}
\label{eq!HW}
\cH_W \sim f^*\cH - m_1E_1 - \cdots - m_rE_r.
\end{equation}

\begin{thm}[Noether--Fano--Iskovskikh inequalities]
\label{thm!nfi}
Under the hypotheses of Theorem~\ref{thm!main}, $\cH$ has a basepoint of
multiplicity at least~$\mu = \deg \cH$: that is, $m_i \ge \mu$ for
some~$i$.
\end{thm}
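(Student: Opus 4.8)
The plan is to extract two numerical identities from the geometry of the fibration and then combine them by a short algebraic manipulation. The inputs are exactly the relations recorded above: $\cH \subset |\mu A|$ with $A = -K_X$ and $A^2 = K_X^2 = 3$, together with \eqref{eqns!EiEj}, \eqref{eq!KW} and \eqref{eq!HW}. The strategy is to pass to the resolution $f\colon W\to X$, where $\phi$ becomes a genuine morphism, and to read off the self-intersection and the canonical pairing of a general fibre.

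First I would note that, since $f$ is the minimal resolution of indeterminacy, the transform $\cH_W$ is a base-point-free pencil and $\phi\circ f=\phi_{\cH_W}\colon W\to\PP^1$ is a morphism. In characteristic zero a general member $C\in\cH_W$ is then a smooth fibre, and because $\phi$ is \emph{elliptic} we have $p_a(C)=1$. Two general fibres are disjoint, so $\cH_W^2=0$. Expanding $\cH_W\sim f^*\cH-\sum_i m_iE_i$ and using $f^*\cH\cdot E_i=0$ together with \eqref{eqns!EiEj} gives
\[ \cH_W^2 = \cH^2 - \sum_i m_i^2 d_i = 3\mu^2 - \sum_i m_i^2 d_i, \]
so that $\sum_i m_i^2 d_i = 3\mu^2$. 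Next, adjunction on $W$ reads $2p_a(C)-2 = C^2 + C\cdot K_W$; since $p_a(C)=1$ and $C^2=0$ we obtain $\cH_W\cdot K_W = 0$. Expanding with \eqref{eq!KW}, and using $\cH\cdot K_X = \mu A\cdot K_X = -\mu K_X^2 = -3\mu$ while $E_i\cdot f^*(-)=0$, yields
\[ \cH_W\cdot K_W = -3\mu + \sum_i m_i d_i, \]
whence $\sum_i m_i d_i = 3\mu$.

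Finally I would combine the two identities: multiplying $\sum_i m_i d_i = 3\mu$ by $\mu$ and subtracting it from $\sum_i m_i^2 d_i = 3\mu^2$ gives $\sum_i d_i\, m_i(m_i-\mu) = 0$. The identity $\cH^2 = 3\mu^2>0$ together with $\cH_W^2=0$ forces $\cH$ to have at least one genuine basepoint. Since every $d_i>0$ and every basepoint satisfies $m_i\ge 1$, the assumption $m_i<\mu$ for all $i$ would make each summand $d_i\, m_i(m_i-\mu)$ nonpositive with at least one strictly negative, contradicting the vanishing of the sum. Hence $m_i\ge\mu$ for some~$i$.

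The arithmetic here is routine; the step that must be handled with care is the geometric input in the first paragraph — namely that passing to the minimal resolution $W$ genuinely produces a \emph{smooth} general fibre of arithmetic genus~$1$, so that both $\cH_W^2=0$ and $p_a(\cH_W)=1$ hold. This is precisely where $\ch(k)=0$ (generic smoothness) and the definition of an elliptic fibration enter. Once these two identities are secured, the inequality follows at once.
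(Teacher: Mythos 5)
Your proposal is correct and follows essentially the same route as the paper: both pass to the minimal resolution $f\colon W\to X$, extract $\cH_W^2=0$ (fibration) and $K_W\cdot\cH_W=0$ (adjunction for a smooth genus-$1$ fibre), expand via \eqref{eq!KW}, \eqref{eq!HW} and \eqref{eqns!EiEj} to get $\sum_i d_im_i^2=3\mu^2$ and $\sum_i d_im_i(1-m_i/\mu)=0$, and conclude by the same sign analysis (including the observation that $r\ge1$). The only cosmetic difference is that the paper packages the computation as $\cH_W\cdot\bigl(K_W+\tfrac{1}{\mu}\cH_W\bigr)=0$ using the $\Q$-linear equivalence $f^*(K_X+\tfrac{1}{\mu}\cH)\qeq 0$, whereas you derive $\sum_i d_im_i=3\mu$ separately and then combine the two identities by hand.
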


\begin{rk}
\label{rk!Pi}
We may assume the
point~$P_i$ with $m_i \ge \mu$ is a point of~$X$, not an
infinitely near point, because multiplicities of linear systems
on nonsingular surfaces are nonincreasing under blowup.
\end{rk}

The theorem contrasts with the familiar case, explained in~\cite{CPR}
and \cite{KSC} \S5.1, for instance, when $\cH \subset |\mu A|$ induces
a birational map from $X$ to a nonsingular surface~$Y$ that is
minimal over~$k$: in this case the NFI inequalities tell us there
is a basepoint of multiplicity strictly larger than~$\mu$.  In
Mori theory the latter statement is that $\XmuH$ has a
noncanonical singularity; the case we need,
Theorem~\ref{thm!nfi}, says that $\XmuH$ has a nonterminal singularity.
For the modern viewpoint on NFI for elliptic and K3 fibrations
birational to Fano varieties, see~\cite{R06}, whose approach
follows Cheltsov~\cite{Ch} and is based on ideas of
Shokurov~\cite{Sh}.

\begin{proof}[Proof of Theorem~\ref{thm!nfi}]
By equations \eqref{eq!KW} and \eqref{eq!HW}
\[ 0 \:\:\qeq\:\: f^*\big(\KXmuH \big) \:\:\qeq\:\: K_W +
\tefrac{1}{\mu}\cH_W - \tesum_{i=1}^r \big(1 -
\tefrac{m_i}{\mu}\big) E_i \]
where $\qeq$~denotes $\Q$-linear equivalence of $\Q$-divisors.
Now the intersection number $\cH_W^2$ is zero since the morphism
$\phi \circ f$ is a fibration, which implies that
\begin{equation}
\tesum_{i=1}^r d_i m_i^2 = 3 \mu^2 \label{eqn!n1}.
\end{equation}
Also $K_W \cH_W = 0$ by the
adjunction formula, and expanding $\cH_W(K_W+(1/\mu)\cH_W) = 0$ gives
\begin{equation}
\tesum_{i=1}^r d_i m_i \big(1 - m_i/\mu \big) = 0. \label{eqn!n2}
\end{equation}
Now (\ref{eqn!n2}) implies the result, since if any of
the coefficients $(1 - m_i/\mu)$ is nonzero then at
least one must be negative.  Note that by
equation~(\ref{eqn!n1}) there is at least one basepoint, that is, $r
\ge 1$; this equation will also be used later.
\end{proof}

\subsection{Proof of Theorem~\ref{thm!main}.}
First we describe the logical structure of the argument.
It falls into two parts according to equation \eqref{eqn!n2}:
either $m_i>\mu$ for some~$i$, in which case we sketch a standard
induction step; or $m_i=\mu$ for every~$i$, and
we work this base case out in detail.

\paragraph{Induction step.}
This is essentially the proof of the birational rigidity
of~$X$, as given in~\cite{CPR}, for example.
We are given a point $P_i\in X$ (by Remark~\ref{rk!Pi}) with
multiplicity $m_i>\mu$ --- by definition, $P_i$ is a
{\em maximal centre} of~$\cH$. So
\[ 3\mu^2 \: = \: (\mu A)^2 \: = \: \cH^2 \: \ge \: m_i^2 d_i \:
> \: \mu^2 d_i, \]
where $d_i = \deg P_i$, and the inequality $\cH^2 \ge m_i^2 d_i$
is the global-to-local comparison of intersection numbers $\cH^2
\ge (\cH)^2_{P_i}$.  It follows that $d_i=1$ or~$2$.

We precompose~$\phi$ with the Geiser or Bertini involution~$i_{P_i}$.
It can be shown --- Lemma~2.9.3 of \cite{CPR} --- that this
\emph{untwists}~$\cH$, in other words that $\deg
(i_{P_i})^{-1}_*\cH < \deg \cH = \mu$, and we conclude by
induction on the degree~$\mu$.  (Note that if $\mu=1$ then all
$m_i=1$ by~\eqref{eqn!n2}.)

\paragraph{Base case.}
Equation~(\ref{eqn!n1}) implies that $\sum d_i = 3$, i.e., if
we count over an algebraic closure~$\kbar$ of~$k$
then there are 3~basepoints; we
must show they arise from Halphen data~$(G,D)$.

So let $\psi = f\circ \phi \colon W \to \PP^1$ be the morphism obtained
by blowing up the base locus $P_1,\ldots,P_r$ of~$\phi$.
We work over~$\kbar$ for the remainder of this paragraph.
Take a general fibre $F$ of~$\psi$; by Bertini's Theorem
$F$~is a nonsingular curve of genus~1.  Now
\[ F \;\sim\; \mu f^*(A) - \mu \tesum_{i=1}^r E_i
\;\sim\; {-\mu K_W}. \]
By Kodaira's canonical bundle formula applied
to~$\psi$,
\[ K_W \;\sim\; \psi^*(K_{\PP^1} + M) \,+\, \tesum_j
(n_j-1) G_j \]
where $M$ is a divisor of degree~$\chi(\cO_W)$
on~$\PP^1$ and the $n_j G_j \sim F$, with $n_j \ge 2$,
are the multiple fibres of~$\psi$.  Now
$\chi(\cO_W) = \chi(\cO_X) = 1$ so $M$ is a point and we have
\[ {-\tefrac{1}{\mu} F} \;\qeq\; {-F} + \tesum_j
\big(1-\tefrac{1}{n_j} \big) F. \]
Therefore $1-\frac{1}{\mu} = \sum_j
\big(1-\frac{1}{n_j} \big)$.  So either $\mu = 1$ and
there are no multiple fibres, or there is a single
multiple fibre $n_1 G_1 = \mu G_1 \sim F$ of
multiplicity~$\mu$. Since the subscheme of multiple fibres is
Galois invariant, $G_1$ is in fact defined over~$k$.
From here on, we work exclusively over~$k$.

In the case $\mu=1$, $\cH$ is a
pencil contained in~$\left|A \right|$ so it gives a linear
fibration and we are done.  The main case is~$\mu>1$.
Let $G_W = G_1$ and $G = f_*(G_W)$: then
\[ G \;\qeq\; f_* \big( \tefrac{1}{\mu} F \big) \;\qeq\;
f_*({-K_W}) \;\sim\; {-K_X} \;=\; A \]
so $G$ is a plane section of~$X$.  By minimality of~$X$,
$G$~is irreducible over~$k$; also
$\mu G = f_*(\mu
G_W) \in f_*(\cH_W) = \cH$, so $\mult_{P_i}(G) \ge 1$
for each basepoint~$P_i$.  (We are abusing notation
here: if $P_i$ is an infinitely near point, let $Z$
denote any surface between $X$  and~$W$ on which $P_i$~lies
and define $\mult_{P_i}(G)$ to be $\mult_{P_i}(G_Z)$, where
$G_Z$ is the pushforward of~$G_W$ to~$Z$.)
We claim that in fact
$\mult_{P_i}(G) = 1$ for each~$P_i$.  Indeed, first note
that $G_W$ is the strict transform of~$G$ on~$W$, since
otherwise $G_W$ would contain some~$E_i$ with multiplicity
at least~1; but then $E_i$ would be contained in a fibre
of~$\psi$, contradicting
\[ FE_i \;=\; {-\mu K_W E_i} \;=\; \mu d_i \;>\; 0. \]
Therefore the claim $\mult_{P_i}(G) = 1$ for each~$P_i$ is
equivalent to
\[ G_W \;=\; f^*(G) - \tesum_i E_i; \]
but the latter follows from the facts $\mu G \in
\cH$, $\mu G_W \in \cH_W$ and $\cH_W = f^*(\cH) - \sum \mu
E_i$.

We now construct an effective $k$-rational divisor~$D$
of degree~3 on~$G$ by the inverse of the procedure in
Definition~\ref{def!res}. 
We define $D$ to be $\sum \ell_i P_i$ as a divisor on~$G$, where the
sum extends over basepoints~$P_i$ that lie on~$X$ (rather
than on a surface dominating~$X$)
and $\ell_i$ is some factor 1, 2 or~3 that we specify.
If the $P_i$ are all points of~$X$ then we set all $\ell_i=1$, so
$D = P_1+P_2+P_3$ (this is one of cases A1, B and~C).
If $P_1, P_2 \in X$ and $P_3$ lies above~$P_2$,
possibly after renumbering, then we set $\ell_1=1$ and~$\ell_2=2$,
so~$D = P_1+ 2P_2$ (case~A2).
Notice that in this case $P_3$ must be the unique intersection point
of the exceptional curve above~$P_2$ and the birational transform of~$G$,
so this procedure is indeed the inverse of the construction
in Definition~\ref{def!res}.
If $P_1 \in X$, $P_2$ lies over~$P_1$
and $P_3$ lies over~$P_2$,
then we set $\ell_1=3$, so $D = 3P_1$ (case~A3); again the points $P_i$ lie
on the strict transform of~$G$ at every stage.

Next we check that $(G,D)$ is Halphen data: the outstanding
point is that $\cO_G(H) \cong \cO_G(\mu D)$
for a general curve $H \in \cH$,
that is, that $H$ cuts out exactly $\mu D$ on~$G$.
At a point~$P$, the divisor of~$H$ on~$G$ is $i_P(H,G)P$,
where $i_P(H,G)$ denotes the local intersection number of $H$ and~$G$.
So we must show that for basepoints $P_i$ that lie on~$X$, we have
$i_{P_i}(H,G)=\ell_i\mult_{P_i}(H)$ for the $\ell_i$ defined above.
In cases A1, B and~C,  $H$ can be chosen so that at any basepoint~$P_i$
none of its branches is tangent to $G$ at~$P_i$ --- otherwise there
would be an additional infinitely near basepoint above~$P_i$ --- so
$i_{P_i}(H,G)=\mult_{P_i}(H)$ and all $\ell_i=1$ as required.
In case A2, using the notation above with $P_3$ the infinitely near point,
again $i_{P_1}(H,G)=\mult_{P_1}(H)$. So
\[
i_{P_2}(G,H) \:\:=\:\: GH - i_{P_1}(G,H) \:\:=\:\: 3\mu - \mu \:\:=\:\:
2\mu \:\:=\:\: 2\mult_{P_2}(H)
\]
and $\ell_2=2$ as required. Case~A3 is similar.

Finally, let $\mu'$ be the index of~$(G,D)$; $\mu'$ is a divisor of~$\mu$.
The construction of Theorem~\ref{thm!constr} now applies to~$(G,D)$
to give a pencil $\cP$ on~$X$ containing~$\mu' G$.
On~$W$, the multiple $(\mu/\mu')\pi_*^{-1}\cP$ is contained
in~$\cH_W$; since $\cH_W$ is a pencil, we have $\mu'=\mu$
and~$\cH=\cP$.

\section{Algorithms}
\label{sec!alg}

We describe algorithms to carry out our analysis of elliptic
fibrations; we assume without comment standard routines of computer algebra
such as Taylor series expansions, ideal quotients and primary
decomposition.
We also need the field~$k$ to be computable;
that is, we must be able to make standard computations in linear
algebra over~$k$ and work with polynomials, rational functions
and power series over~$k$ and in small finite extensions of~$k$.
The routines are expressed here in a modular way; we have implemented them
in the computer algebra system \magma\ \cite{Ma} closely
following this recipe.  Our descriptions below are self-contained
and we include them to support the code.

The initial setup of the cubic surface is this:
$R=k[x,y,z,t]$ is the homogeneous coordinate ring of~$\P^3$
and $R(X) = R/F=\oplus_{n\in\N}H^0(X,\cO(n))$
is the homogeneous coordinate ring of~$X$;
here $F=F(x,y,z,t)$ is the defining equation of~$X$,
a homogeneous polynomial of degree~$3$.

\paragraph{Overview of the computer code.}

The code can be used
to build examples of Halphen fibrations, as in Section~\ref{sec!hal},
and Geiser and Bertini involutions in order to twist Halphen
fibrations, as in Section~\ref{sec!GB}; using these in
conjunction, one can realise Theorem~\ref{thm!main} for
particular examples.  The central point in all of these is to
impose conditions on linear systems on~$X$.
We describe an algorithm to do this in Section~\ref{sec!imp};
this follows our code very closely.
Then we explain the applications in Section~\ref{sec!app}.

Finally we give an implementation of Theorem~\ref{thm!main}
in Section~\ref{sec!mainalg}. This requires two additional elements:
we need to compute the multiplicity of a linear system (not just
a single curve) at a point $P\in X$ and to analyse the base locus
of a linear system on~$X$.

\subsection{Imposing conditions on linear systems}
\label{sec!imp}
This is the central algorithm: given a (nonsingular,
rational) point $P\in X$ and positive integers
$d$ and~$m$, return the space of forms of degree $d$
on~$\P^3$ that vanish to order $m$ at~$P$ when
regarded as functions on~$X$ in a neighbourhood of~$P$.

\paragraph{Step 1: A good patch on the blowup of $X$ at $P$.}
Change coordinates so that
$P=(0:0:0:1)\in X\subset\P^3$ and so that the projective
tangent space $T_pX$ to~$X$ at~$P$ is the hyperplane $y=0$. Then
consider the blowup patch $(xz,yz,z)$ in local coordinates on~$X$ at~$P$.
Altogether, this determines a map $f\colon\A^3\rightarrow\P^3$
with exceptional divisor $\Eamb=(z=0)$.
The birational transform $\Xtil$ satisfies
$f^*(X) = \Xtil + \Eamb$ and the exceptional
curve of $f_{|\Xtil}\colon \Xtil\rightarrow X$ is $E=\Eamb\cap\Xtil$,
which is the $x$-axis in~$\Eamb$.

\paragraph{Step 2: Parametrise $\Xtil$ near the generic point of $E$.}
The local equation of~$\Xtil$ is $g=f^*(F)/z$.
The exceptional curve $E$ is the $x$-axis.  Working over
$K=k(x)$, $\widetilde{X}$ is the curve $g(y,z)=0$ in~$\A^2_K$,
and this is nonsingular at the origin (the generic point of~$E$).
Cast $g$ into the ring $k(x)[\![z]\!][y]$ and compute a root $Y$
of~$g$ as a polynomial in~$y$ --- this is the implicit function
$Y=y(z)\in K[\![z]\!]$ implied by $g(y,z)=0$ (with coefficients
in~$K$).

\paragraph{Step 3: Pull a general form of degree $d$ back along
the blowup.}  Let $N$~be the binomial coefficient $d+3$ choose
$3$ and let $p = a_1x^d + a_2x^{d-1}y + \cdots + a_Nt^d$ be a
form of degree~$d$ with indeterminate coefficients
$a_1,\dots,a_N$.  Compute $q(x,y,z) = f^*(p)$.

\paragraph{Step 4: Impose vanishing conditions on $q$.}
Evaluate $q$ at $y=Y$.
The result is a power series in~$z$ with coefficients in~$k(x)$
and the indeterminates $a_1,\dots,a_N$.
The condition that $p$ vanishes to order
at least $m$ at~$P\in X$ is just that the coefficient of
$z^i$ vanishes identically for $i=0,\dots,m-1$.
Each such coefficient is of the form $p_i(x,a_1,\dots,a_N)/q_i(x)$,
where $q_i(x)$ is a polynomial in~$x$ and $p_i$ is polynomial in~$x$
but linear in $a_1,\dots,a_N$.
Writing $p_i=\sum_j \ell_{i,j}(a_1,\dots,a_N)x^j$,
the coefficient of~$z^i$ is zero if and only if
$ \ell_{i,j}(a_1,\dots,a_N)=0$ for each~$j$.
This is finitely many $k$-linear conditions on the~$a_i$.

\paragraph{Step 5: Interpret the linear algebra on $X$.}
Choose a basis of the solution space~$U_0$ of the linear conditions
on $a_1,\dots,a_N$. This is almost the solution; if $d\ge 3$,
however, we must work modulo the equation $F$ of the surface~$X$.
This is trivial linear algebra: compute the span
$W_d=F\cdot\cO(d-3)$ of~$F$ in degree $d$, intersect with the
given solutions $W=W_d\cap U_0$, and then compute a complement
$U$ inside $U_0$ so that $U_0=W\oplus U$.  A basis of~$U$ gives
the coefficients (in the ordered basis of monomials of degree
$d$) of a basis of the required linear subsystem of
$|\cO_{\P^3}(d)|$.

\paragraph{Variation 1: working inside a given linear system.}

Rather than working with all monomials of degree~$d$, we can
start with a subspace $V \subset H^0(X,\cO(d))$ and impose
conditions on that.
We simply work with a basis of~$V$ throughout the calculation
in place of the basis of monomials used above.

\paragraph{Variation 2: non-rational basepoints.}

In our applications, the only nonrational basepoints~$P$ that we
need to consider have degree 2 or~3.
In the former case we can make a degree~2 extension
$k\subset k_2$ so that $P$ is rational after base change to~$k_2$.
Computing as before at one of the two geometric points of~$P$
gives $k_2$-linear conditions on the coefficients~$a_i$.
Picking a basis for $k_2$ over~$k$, we can split these conditions
into `real and imaginary' parts, and impose them all as
linear conditions over~$k$.
A similar trick works for points of degree~3.

\subsection{Applications of the central algorithm}
\label{sec!app}

\paragraph{Building Halphen fibrations from Halphen data.}

We are given Halphen data $(G,D)$ of index $\mu$ on~$X$, as in
Definition~\ref{def!hal}, and we need to construct the associated
Halphen system $\cH\subset |\mu A|$ of Definition~\ref{def!H}
by imposing conditions on~$|\mu A|$.

Recall the points $P_i$ that are blown up in
Definition~\ref{def!res} to make the resolution of~$(G,D)$.
In cases A1, B and C, we simply impose the basepoints of~$X$
as multiplicity $\mu$ basepoints of~$\cH$, using Variation~2
of the algorithm to handle nonrational basepoints.
In case A2, we need to impose the conditions at~$P_1$
and $P_3$ only --- for the latter we must blow up $X$ at
$P_2$ and compute on that new surface.
Similarly in case A3 we make two blowups and impose conditions
only at~$P_3$.

\paragraph{Geiser and Bertini involutions.}

As usual, let $A=\cO_X(1)$.  The Geiser involution at~$P$ is
given by the linear system $\cL=|2A-3P|$, and the Bertini
involution at~$P$ is given by $\cL=|5A-6P|$. Bases of these
linear systems are computed by the algorithm of
Section~\ref{sec!mainalg}; we start by computing any basis, which
determines a map $j_P\colon X \broken \P^3$.

However, it is important to choose the right basis. There are two
problems that may occur with our initial choice: the image of
$j_P$ may not be~$X$; and, even if it is, $j_P$ could be the
involution we want composed with a linear automorphism of~$X$.
Our solution is to mimic the geometric definition
of~$i_P$ in Section~\ref{sec!GB}.
For both Geiser and Bertini involutions we find five
affine-independent points and compute their images under both
$i_P$ and~$j_P$, and thus interpolate for the linear automorphism
$\tau$ of~$\P^3$ such that $i_P = \tau \circ j_P$.

In the Geiser case, if $L$ is a general line
through $P$ then the two residual points of $X\cap L$ are swapped
by the involution.
Typically, residual points arising as $X\cap L$ become geometric
only after a degree~2 base change, and different lines need
different field extensions. This is a bit fiddly in computer
code, but is only linear algebra.  (There may be a better
solution using the projection of~$X$ away from $P$ to $\P^2$ and
working directly with the equation of~$X$ expressed as a
quadratic over the generic point of~$\P^2$.)

For the Bertini involution, in order to compute a single point
and its image under $i_P$ we first find the unique line $L$ though~$P$
and the point $R\in X$ such that $L\cap X = \{P,R\}$.
Let $\Pi\supset L$ be a general plane containing $L$; $E = X\cap \Pi$
is a nonsingular cubic curve.
We make the Weierstrass model of~$(E,R)$ --- that is, we embed $E$
in a new plane $\P^2$ with $R$ as a point of inflexion.
In that model, we take a general line through~$R$ and compute
the two other (possibly equal) intersection points $(Q_1,Q_2)$
of that line with~$E$.
Then $Q_2=-Q_1$ in the group law on~$E$ with $R$ as zero, and the
Bertini involution maps $Q_1$ to~$Q_2$.  Of course it may
happen that the points $Q_i$ are not $k$-rational; but in that
case, as for the Geiser involution, we simply make a degree~$2$
field extension to realise them and separate `real and imaginary'
parts later.

\paragraph{Calculating multiplicities of linear systems.}

Suppose $\cH$ is a linear system on~$X$ and $P\in X$ a point of
degree~1. To compute the multiplicity of~$\cH$ at~$P$
we run the first three steps of the algorithm of
Section~\ref{sec!mainalg} and the first evaluation of Step~4.
The result is a power series in the variable~$z$, and the
multiplicity of~$\cH$ at~$P$ is the order of that power series.

Whether this works in practice depends on what implementation
of power series is being used. If power series are expanded
lazily with precision extended as required then it works as
stated; if they are
computed to a fixed precision then the algorithm is best applied
to compute lower bounds on multiplicities.  Fortunately we use it
only to identify maximal centres, for which a lower bound is
exactly the requirement.

\subsection{The main theorem: untwisting elliptic fibrations}
\label{sec!mainalg}

We are given a cubic surface $X\subset\P^3$ together with
a rational map $\phi\colon X \broken \P^1$ defined by
two homogeneous polynomials $f,g$ of common degree~$d$.
Equivalently, we may regard $\phi$ as a linear system
$\cH=\left< f,g \right> \subset H^0(X,\cO(\mu))$.
In outline, the algorithm is simple; it terminates
by the proof of Theorem~\ref{thm!main}, the main point being
that Step~3 below cannot be repeated infinitely often.

\paragraph{Step 0: Trivial termination.}
If the degree~$\mu$ is equal to 1 then stop: the pencil
must be a linear elliptic fibration. Return the pencil
and its base locus (which is trivial to compute).

\paragraph{Step 1: Basepoints.}
Ideally we would compute precisely the base locus of~$\cH$
as a subscheme of~$X$ and work directly with that.
But to avoid computing in local rings, our algorithm in
Section~\ref{sec!base} below computes a finite set
of reduced zero-dimensional subschemes of~$X$ that
supports the base locus. (In short, it solves $f=g=0$ on~$X$
and then strips off one-dimensional primary components.)
We call these {\em potential basepoints} of~$\cH$.

As in Section~\ref{sec!proof}, the degree of a maximal centre
is at most 2, so we discard any potential basepoints of
higher degree. We refer to any of these as a
{\em potential centre} of~$\phi$.

\paragraph{Step 1a: Check termination.}
If there are no potential centres then stop: the linear system
must be an Halphen system, and moreover we must be in case C
of Definition~\ref{def!res} --- that is, there is a single
basepoint of degree~$3$.
Return the system and its base locus.

\paragraph{Step 2: Multiplicities.}
Compute the multiplicity of the linear system $\cH$ at each
potential centre $P$ in turn. (At points of degree~2 we make a
quadratic field extension and calculate at one of the two resulting
geometric points.)
If $P$ has multiplicity $m>\mu$ then go to Step~3.
It may happen that no such $P$ exists, in which case:

\paragraph{Step 2a: Termination.}
This is the base case of the proof of Theorem~\ref{thm!main}.
The linear system gives an Halphen fibration and its base locus
consists of all the potential centres of multiplicity~$m=\mu$.
Return the linear system and its base locus.

\paragraph{Step 3: Untwist.}
If the maximal centre $P$ has degree~1 then compute the Geiser
involution $i_P\colon X\broken X$ at that point.  If it has
degree~2, compute the Bertini involution $i_P\colon X\broken X$.
In either case, replace~$\phi$ by $\phi\circ i_P$
and repeat from Step~0.

\subsection{Analysing base loci on surfaces}
\label{sec!base}

It remains to provide an algorithm for Step~1 above.
We work in slightly more generality with an arbitrary
linear system $\cL$ on~$X$ corresponding to a
subspace $V\subset H^0(X,\cO(d))$. The base locus $B=\Bs\cL$
of~$\cL$ is contained in the subscheme $B'\subset X$ defined by the
ideal $I=\left< V \right>\subset R(X)$; the algorithm below
returns the reduced set of associated primes of height~$\ge 2$
of~$B'$.

\paragraph{Step 0: Setup.}
$\cL$ is defined by a basis of~$V$, a finite set of homogeneous
polynomials $p_1,\dots,p_k$ of degree~$d$.
Let $I=\left< p_1,\dots,p_k,F \right>\subset R$; this is the
ideal of~$B'$ considered as a subscheme of~$\P^3$.

\paragraph{Step 1: Identify and remove codimension 1 components.}
Let $I_{\red}$ be the radical of~$I$ and let $P_1,\dots,P_N$ be
the height~1 associated primes of~$I_{\red}$. Let $J_0=I$ and,
for $i=1,\dots,N$, let $J_i=(J_{i-1} : P_i^{n_i})$ where
$n_i\in\N$ is minimal such that $J_i$ is not contained in~$P_i$.
This removes the codimension~1 base locus without removing
any embedded primes there (at least set-theoretically): the radical
of~$J_N$ is the ideal of the set of all isolated or embedded
basepoints.

\paragraph{Step 2: End.}
Let $K=\rad(J_N)$, the ideal of a reduced zero-dimensional scheme.
Let $R_1,\dots,R_M$ be the associated primes of~$K$.
Return this set of primes.

\section{Examples}
\label{sec!egs}

We have implemented computer code in the \magma\ computational
algebra system; together with instructions, it can be downloaded
at~\cite{BR}. We present some examples below to illustrate our code.
Here we work in~$\PP^3$ defined over $k=\Q$, which we input as:
{\small
\begin{verbatim}
> k := Rationals();
> P3<x,y,z,t> := ProjectiveSpace(k,3);
\end{verbatim}
}\noindent
The symbol {\small \tt >} is the \magma\ prompt.
In some cases below the output has been edited mildly.

\subsection{An Halphen fibration with $\mu=2$}

We start with the surface $X\colon(t^3 - x^3 + y^2z + 2xz^2 -
z^3=0)\subset\P^3$.
{\small
\begin{verbatim}
> X := Scheme(P3,t^3 - x^3 + y^2*z + 2*x*z^2 - z^3);
> IsNonsingular(X);
true
\end{verbatim}
}\noindent
The surface $X$ is not minimal --- for example, $z=x-t=0$ is a
line --- but we can still construct interesting elliptic
fibrations on it.  The $t=0$ section of~$X$ is an elliptic
curve~$G$ with origin $O=(0:1:0:0)$ and an obvious rational
$2$-torsion point $R=(1:0:1:0)$. (Of course, to construct the
example we started with this curve and extended to~$X$.)
{\small
\begin{verbatim}
> O := X ! [0,1,0,0];
> R := X ! [1,0,1,0];
\end{verbatim}
}\noindent
To make Halphen data with~$\mu=2$, we need
an effective, $k$-rational divisor $D$ on~$G$ of degree~3
for which $D-3O$ is $2$-torsion in~$\Pic(G)$.
We construct such~$D$ as follows. Let $L\subset \PP^3$ be the
line $y=t=0$ and define a point of degree~$2$ on~$X$ by
$L\cap X = \{R,P\}$: so $P$ is the union of the two
points $(\alpha:0:1:0)$ with
$\alpha^2 + \alpha -1=0$.
Define $D=P+O$ as a divisor on~$G$.
The pair $(G,D)$ is Halphen data of index~$\mu=2$.
In fact the construction of the Halphen system is in terms
of linear systems and points on~$X$, rather than on~$G$, so for
the calculation it only remains to construct~$P$.
{\small
\begin{verbatim}
> L := Scheme(P3,[y,t]);
> PandR := Intersection(X,L);
> P := [ Z : Z in IrreducibleComponents(PandR) | Degree(Z) eq 2 ][1];  P;
Scheme over Rational Field defined by x^2 + x*z - z^2, y, t
\end{verbatim}
}\noindent
We build the Halphen system by imposing
$D$ as base locus of multiplicity~2 on the linear
system $|2A|$, where $A$ is a hyperplane section of~$X$.
{\small
\begin{verbatim}
> A2 := LinearSystem(P3,2);
> H0 := ImposeBasepoint(X,A2,P,2);
> H := ImposeBasepoint(X,H0,O,2);
> H;
Linear system on Projective Space of dimension 3
with 2 sections: x^2 + x*z - z^2, t^2
\end{verbatim}
}\noindent
The resulting fibration is $\phi=(x^2+xz-z^2 : t^2)\colon
X\broken\PP^1$, and we see $\phi^{-1}(1:0)=2G$.  We check that
the fibre $C = \phi^{-1}(-1:1)$ is irreducible and has genus~1:
{\small
\begin{verbatim}
> C := Curve(Intersection(X, Scheme(P3, t^2 + x^2 + x*z - z^2)));
> assert IsIrreducible(C);                                       
> Genus(C);                                                      
1
\end{verbatim}
}

\subsection{Geiser and Bertini involutions}

We construct a Geiser involution on the minimal surface
$X\colon (x^3 + y^3 + z^3 + 3t^3=0)\subset\PP^3$.
{\small
\begin{verbatim}
> X := Scheme(P3,x^3 + y^3 + z^3 + 3*t^3);
> P := X ! [1,1,1,-1];
> iP := GeiserInvolution(X,P);
> DefiningEquations(iP);
\end{verbatim}
}\noindent
returns the equations of the involution~$i_P$:
\begin{eqnarray*}
(   \; -xy + y^2 - xz + z^2 - 3xt - 3t^2 \;:\;
    x^2 - xy - yz + z^2 - 3yt - 3t^2 \;: \mbox{\hspace{20mm}} \\
  x^2 + y^2 - xz - yz - 3zt - 3t^2 \;:\;
    -x^2 - y^2 - z^2 - xt - yt - zt\; ).
\end{eqnarray*}
Since $P\in X$ is not an Eckardt point --- we discuss that case below ---
the Geiser involution contracts the tangent curve $C_P = T_P(X) \cap X$
to~$P$.
{\small
\begin{verbatim}
> TP := TangentSpace(X,P);
> CP := Curve(Intersection(X,TP));
> iP(CP);
Scheme over Rational Field defined by z + t, y + t, x + t
> Support(iP(CP));
{ (-1 : -1 : -1 : 1) }
\end{verbatim}
}\noindent
To make a Bertini involution, we find a point of degree~$2$.
{\small
\begin{verbatim}
> L := Scheme(P3,[x-y,z+t]);
> XL := Intersection(X,L);
> Q := [ Z : Z in IrreducibleComponents(XL) | Degree(Z) eq 2 ][1];
> iQ := BertiniInvolution(X,Q);
> DefiningEquations(iQ);
\end{verbatim}
}\noindent
again returns the equations of~$i_Q$, although in this case they are too
large to print reasonably: the first equation has 38 terms, beginning with
\[
   6x^2y^3 - 5xy^4 + 5y^5 - x^2y^2z - xy^3z - 4x^2yz^2 - 4y^3z^2 + 6x^2z^3 - 
        4xyz^3 + 11y^2z^3 - \cdots.
\]

\subsection{Eckardt points}
A $k$-rational point $P\in X$ is an {\em Eckardt point}
if $T_P X \cap X$ splits as three lines through~$P$
over a closure~$\overline{k}\supset k$.
For example, the surface
\[
X\colon (x^3 + y^3 + z^3 + 2t^3 = 0) \subset \P^3
\]
is minimal and $P=(1:-1:0:0)\in X$ is an Eckardt point:
$T_P X \cap X = (x+y=z^3 + 2t^3 = 0)$.
Geiser involutions in Eckardt points are in fact biregular,
and we see this here:
{\small
\begin{verbatim}
> X := Scheme(P3, x^3 + y^3 + z^3 + 2*t^3);
> P := X ! [1,-1,0,0];
> iP := GeiserInvolution(X,P);
\end{verbatim}
}\noindent
When \magma\ computes a map to projective space, it does not
automatically search for common factors between the defining
equations and cancel them.  To see the map more clearly, we
do this by hand.
{\small
\begin{verbatim}
> [ f div GCD(E) : f in E ] where E is DefiningEquations(iP);
[ y, x, z, t ]
\end{verbatim}
}\noindent
So the Geiser involution~$i_P$ switches $x$ and~$y$ in this
case, and that is clearly a biregular automorphism of~$X$.

\subsection{An example of untwisting}

Working on the same surface $X\colon (x^3 + y^3 + z^3 + 2t^3 =
0)$ as above, consider the fibration $f =(f_1:f_2)\colon X
\broken \PP^1$ defined by the two polynomials
{\small
\begin{center}
$f_1 =
57645x^2y^3 + 47234xy^4 - 9963y^5 + 23490x^2y^2z + 97322xy^3z +
70056y^4z - 26730x^2yz^2 - 33603xy^2z^2 + 5751y^3z^2 +
47925x^2z^3 + 85664xyz^3 - 5373y^2z^3 + 41480xz^4 + 72990yz^4 +
4095z^5 + 8100x^2y^2t + 157516xy^3t + 148392y^4t - 200880x^2yzt -
25896xy^2zt + 182664y^3zt + 9720x^2z^2t - 10800xyz^2t -
42408y^2z^2t + 118912xz^3t + 194220yz^3t + 109800z^4t -
124740x^2yt^2 - 27990xy^2t^2 + 96462y^3t^2 - 42120x^2zt^2 -
112938xyzt^2 - 70722y^2zt^2 + 24042xz^2t^2 + 28314yz^2t^2 +
63558z^3t^2 + 118530x^2t^3 + 111736xyt^3 - 48186y^2t^3 +
157684xzt^3 + 176616yzt^3 + 14958z^2t^3 + 247316xt^4 + 338796yt^4
+ 265536zt^4 + 123444t^5$
\end{center}
}\noindent
and
{\small
\begin{center}
$f_2=
20232x^2y^3 + 27216xy^4 + 6600y^5 - 66429x^2y^2z - 29187xy^3z +
40250y^4z + 25596x^2yz^2 - 8532xy^2z^2 - 42800y^3z^2 +
24507x^2z^3 + 23436xyz^3 + 3585y^2z^3 - 4185xz^4 + 35420yz^4 -
38240z^5 - 48978x^2y^2t + 77706xy^3t + 128092y^4t - 84456x^2yzt -
85428xy^2zt - 11724y^3zt + 65322x^2z^2t + 26676xyz^2t -
8214y^2z^2t + 100710xz^3t + 125152yz^3t + 25500z^4t -
196596x^2yt^2 - 75438xy^2t^2 + 122086y^3t^2 - 106596x^2zt^2 -
104598xyzt^2 + 366y^2zt^2 + 4590xz^2t^2 - 6786yz^2t^2 +
144574z^3t^2 - 62424x^2t^3 - 63612xyt^3 - 16932y^2t^3 -
105030xzt^3 + 1972yzt^3 - 98056z^2t^3 + 117720xt^4 + 231884yt^4 +
36888zt^4 + 247412t^5$.
\end{center}
}\noindent
Amazingly enough, this is an elliptic fibration --- although that
is by no means obvious, and we gave up on computing the genus of a
fibre with \magma\ after 5~hours.  To understand~$f$, we follow
the proof of Theorem~\ref{thm!main} as the algorithm of
Section~\ref{sec!mainalg}.  First we look for a maximal centre.
{\small
\begin{verbatim}
> P1 := ProjectiveSpace(k,1);
> f := map< P3 -> P1 | [f1,f2] >;
> time existence, Q := HasMaximalCentre(f,X); assert existence;
Time: 64.240
\end{verbatim}
}\noindent
This function, which executes Steps~1 and~2 of
Section~\ref{sec!mainalg}, returns either one or two values:
first, either true or false according to whether $f$ has a
maximal centre or not; and, second, a maximal centre if there is one.
In this example there is a maximal centre of degree~$2$:
{\small
\begin{verbatim}
> Q;
Scheme over Rational Field defined by
z^2 - 31/4*z*t - 5/4*t^2,  x + 3/2*z + 3/2*t,  y - 3/2*z - 1/2*t
> Degree(Q);
2
\end{verbatim}
}\noindent
We don't need to know it, but in fact $Q$ is the following pair
of conjugate points:
{\small
\begin{verbatim}
> k2<w> := Degree2SplittingField(Q);
> Support(Q,k2);
{ (w : -w - 1 : 1/3*(-2*w - 3) : 1),
      (1/8*(-8*w - 117) : 1/8*(8*w + 109) : 1/12*(8*w + 105) : 1) }
\end{verbatim}
}\noindent
Here $k_2$ is the number field $\Q[w]/(8w^2 + 117w + 135)$.

Following Step~3 of Section~\ref{sec!mainalg}, we untwist $f$ using
the Bertini involution $i_Q$ centred at~$Q$.
{\small
\begin{verbatim}
> iZ := BertiniInvolution(X,Z);
> g := iZ * f;
\end{verbatim}
}\noindent
As before, the defining equations of~$g$ have not been simplified by
\magma, and are of degree~$25$ with thousands of terms and
no common factor. However, a simple interpolation shows that $g$
is the map $(x:y)$.
We omit the demonstration of this here, but instead confirm it
by cross multiplication.
{\small
\begin{verbatim}
> Eg := DefiningEquations(g);
> assert IsDivisibleBy(x*Eg[2] - y*Eg[1], DefiningEquation(X));
\end{verbatim}
}\noindent

\subsection{The problem of minimality}

Geiser and Bertini involutions exist whether or not the
surface~$X$ is minimal: the geometric descriptions given in
Section~\ref{sec!GB} work regardless. In the
nonminimal case, however, the linear systems that determine the
involutions need not be $|2A-3P|$ and~$|5A-6P|$.  Here we give an
example where $|5A-6P|$ does not give a Bertini involution.

Let $X=(xt^2 + x^2y+y^3-z^3=0)\subset\P^3$.
The point $P = (0:0:0:1)$ is an Eckardt point
with tangent curve splitting as a line $x=y-z=0$ and
a conjugate pair of lines $x=y^2+yz+z^2=0$.
The point $Q=(1:0:0:0)$ lies on three conics,
each defined by $xy=t^2$ together with one of
the linear factors of~$y^3-z^3$.
Clearly each of the conics meets exactly
one of the lines, and that intersection is tangential.
The three intersection points
are $(0:1:1:0)$, $(0:\omega:1:0)$ and $(0:\omega^2:1:0)$
where $\omega$~is some chosen primitive cube root of~$1$.
Let $Z=(x=t=y^2+yz+z^2=0)\subset X$ be the conjugate pair
of intersection points.
Although $X$ is clearly not minimal, we can compute
the linear system~$|5A-6Z|$.
{\small
\begin{verbatim}
> X := Scheme(P3,x*t^2 + x^2*y + y^3 - z^3);
> Z := Scheme(P3,[x,t,y^2+y*z+z^2]);
> L1 := ImposeBasepoint(X, LinearSystem(P3,5), Z, 6);
> L2 := Complement(L1,X);
\end{verbatim}
}\noindent
Notice that since the linear system is computed on the ambient~$\PP^3$,
we must work modulo the equation of~$X$ by hand, taking a
complement of the subspace of degree~$6$ polynomials that it
divides --- in previous examples this was hidden inside the
function for Bertini involutions.

But this is the wrong linear system; it has (projective) dimension~4:
{\small
\begin{verbatim}
> #Sections(L2);
5
\end{verbatim}
}\noindent
Our code cannot compute the Bertini involution in this case.
Out of interest, we show instead how to make the map $f\colon X\broken\PP^4$
with these five sections and compute its image.
{\small
\begin{verbatim}
> P4<[a]> := ProjectiveSpace(k,4);
> f := map< P3 -> P4 | Sections(L2) >;
> f(X);
\end{verbatim}
}\noindent
returns a surface in~$\P^4$ defined by three equations,
the $2\times 2$ minors of the $2\times 3$ matrix
\[
\begin{pmatrix}
\; -a_4 \; & \; a_1^2 + a_2^2 + a_2a_3 + a_3^2 \; & \; a_1 \; \\
\; a_5 \; & \; a_4^2 - a_1a_3 \; & \; a_2-a_3 \;
\end{pmatrix}.
\]
The third minor is the equation of~$X$; the second is
the cone on $\P^1\times\P^1$ in some coordinates.
In fact, this image surface is singular: it has a single
Du Val singularity of type~${\mathrm A}_2$.
The map~$f$ blows up~$Z$
and then contracts the two conjugate lines that meet at~$P$,
which form a chain of two $-2$-curves on the blowup.

\vspace{5mm}
\noindent
Gavin Brown, IMSAS, University of Kent, CT2 7AF, UK.\\
{\small \texttt{gdb@kent.ac.uk} }

\vspace{5mm}
\noindent
Daniel Ryder, Dept.\ of Mathematics, University Walk,
Bristol, BS8 1TW, UK.\\
{\small \texttt{daniel.ryder@bristol.ac.uk} }

\end{document}